\newtheorem{thm}{Theorem}[section]
\newtheorem{cor}[thm]{Corollary}
\newtheorem{lem}[thm]{Lemma}
\numberwithin{equation}{section}
\renewcommand{\thefootnote}
\newcommand\co{\operatorname{co}}
\author {B\'echir Amri and Abdessalem Gasmi}
\title{On the estimates of   the  Dunkl  Kernel  }
\date{}
\begin{document}
 \maketitle
\begin{center}
    Department of Mathematics\\
College of Sciences\\
Taibah University\\
P.O. Box 30002
Al Madinah Al Munawarah, Saudi Arabia\\
\textbf{ e-mail:} bechiramri69@gmail.com; \textbf{ e-mail:}gasmi1975@yahoo.fr
\end{center}
  \begin{abstract}
 In this paper,  we are interested in the
estimates of  the   Dunkl  Kernel on some special sets,   following    the  work of  M.F.E. de Jeu and  M.  R\"{o}sler    in  \cite{R3}.

\footnote{\noindent\textbf{Key words and phrases:}  Reflection group,  Dunkl operators; Dunkl kernel.  \\
\textbf{Mathematics Subject Classification}.  Primary 51F15; 33C67. Secondary 34E05. }
 \end{abstract}
\section{ Introduction }
Dunkl (1990-1991) originally defined    a family of differential-difference operators associated to
a finite reflection group on a Euclidean space, that eventually became associated with his name.
The eigenfunctions of the Dunkl operators, that known as Dunkl  kernel,   were first considered   by  Dunkl  \cite{D2}  and   have been later intensively studied   and investigated by several authors \cite{J1,D3,R3,R2}. One of the principal problem
that arises  in the  study of the Dunkl's kernels is   the  asymptotic behaviors of these functions,
which  were  known  only  for the reflection  group  $\mathbb{Z}_2^n$, and conjectured to have
an   extensions to all reflection groups. In this  work   we take up this  problem, we  obtain sharp  estimates when we restricted us to  cones   lie   in the interior  of the Weyl chamber.
   \par Let us  begin with a few definitions  and results as preliminary material. General references are \cite{D1,D2,R3,R2,R4,Xu}
\par Let $G\!\subset\!\text{O}(\mathbb{R}^n)$
be a finite reflection group associated to a reduced root system $R$
and $k:R\rightarrow[0,+\infty)$  be a $G$--invariant function
(called multiplicity function).
Let $R^+$ be a positive root subsystem. The Dunkl operators  $D_\xi^k$ on $\mathbb{R}^n$ are
the following $k$--de\-for\-ma\-tions of directional derivatives $\partial_\xi$
by difference operators\,:
\begin{equation}\label{Dxi}
D_\xi^k f(x)=\partial_\xi f(x)
+\sum_{\,\upsilon\in R^+}\!k(\upsilon)\,\langle\upsilon,\xi\rangle\,
\frac{f(x)-f(\sigma_\upsilon.\,x)}{\langle\upsilon,\,x\rangle}\,,
\end{equation}
where here
 $\sigma_\upsilon$ is the reflection
with respect to the hyperplane orthogonal to $ \upsilon$ and $\langle .,\, .\rangle$ is the usual Euclidean
inner product with  $| \,.\,  |$ its induced norm.
The operators $\partial_\xi$ and $D_\xi^k$
are intertwined by a Laplace--type operator
\begin{eqnarray*}\label{vk}
V_k\hspace{-.25mm}f(x)\,
=\int_{\mathbb{R}^n}\hspace{-1mm}f(y)\,d\nu_x(y)
\end{eqnarray*}
associated to a family of compactly supported probability measures
\,$\{\,\nu_x\,|\,x\!\in\!\mathbb{R}^n\hspace{.25mm}\}$\,.
Specifically, \,$\nu_x$ is supported in   the convex hull $\co(G.x)\,.$
\par For every $y\!\in\!\mathbb{C}^n$\!,
the simultaneous eigenfunction problem
\begin{equation*}
D_\xi^k f=\langle y,\xi\rangle\,f,
\qquad\forall\;\xi\!\in\!\mathbb{R}^n,
\end{equation*}
has a unique solution $f(x)\!=\!E_k(x,y)$
such that $E_k(0,y)\!=\!1$, called the Dunkl kernel and is given by
\begin{equation}\label{ros}
E_k(x,y)\,
=\,V_k(e^{\,\langle.,\,y\,\rangle})(x)\,
=\int_{\mathbb{R}^n}\hspace{-1mm}e^{\,\langle z,y\rangle}\,d\nu_x(z)
\qquad\forall\;x\!\in\!\mathbb{R}^n.
\end{equation}
 When $ k =0$ the Dunkl kernel $E_k(x,y)$ reduces to the exponential $e^{\langle x,y\rangle}$.Furthermore this kernel has a holomorphic extension to $\mathbb{C}^d\times \mathbb{C}^d $
 and the following estimate hold\,: for  $ \;x, \;y\!\in\!\mathbb{C}^d,$
\begin{itemize}
\item[(i)] $E_k(x,y)=E_k(y,x)$,
\item[(ii)] $E_k(\lambda x,y)=E_k(x,\lambda  y)$, for $\lambda\in \mathbb{C}$
\item[(iii)] $E_k(g. x,g.y)=E_k(x, y)$, for $g\in G$.
\item[(iv)] If we design by  $x^+$   the intersection point of any
  orbit $G.x$ with the closure of the weyl chamber $\overline{C}$, then  for $z\in \mathbb{C}$
\begin{equation}\label{Ez}
 |E_k(zx, y)|\leq e^{Re(z)\langle x^+, \;y^+\rangle}.
\end{equation}
  \end{itemize}
 In dimension $d\!=\!1$,
these functions can be expressed in terms of hypergeometric function $_1F_1$, specially
$$E_k(x,y)=E_k(xy)= e^{xy} \;_1F_1(2k,2k+1,-2xy)$$
  and from the behavior of  $_1F_1$ (see, e.g. \cite{MA})  one can deduce the estimates
 \begin{equation}\label{d=1}
  | E_k(xy)|\leq c\frac{e^{|xy|}}{|xy|^{k}};\qquad  | E_k(ixy)|\leq c|xy|^{-k};\quad x,y\in \mathbb{R}\setminus\{0\}.
 \end{equation}
 The subject is  then    a generalisation of these estimates to any reflection group.
 Then (\ref{d=1}) become
 \begin{equation}\label{d=n}
  | E_k(x,y)|\leq c\frac{e^{ \langle x^+,y^+\rangle}}{ \sqrt{w_k(x)w_k(y)}};\qquad  | E_k(ix,y)|\leq \frac{c}{ \sqrt{w_k(x)w_k(y)}}  ;\quad x,y\in \mathbb{R}^n\setminus \bigcup_{\alpha\in R^+} H_\alpha.
 \end{equation}
 where
$$ w_k(x)=\,\prod_{\,\upsilon\in R^+}|\,\langle\upsilon,x\rangle\,|^{\,2\,k(\upsilon)}$$
which is $G$-invariant and homogeneous of degree $2\gamma_k$,
$$\gamma_k=\sum_{\alpha\in R^+}k(\alpha).$$
In this paper we shall prove that (\ref{d=n}) hold for every $x,y\in C_\delta$, $\delta>0$ where
$$ C_\delta=\{x\in C;\; \langle x,\alpha \rangle\geq \delta|x| \}$$
with constant $c$ depends only on $\delta$.
 The motivation of studying the asymptotic behavior at infinity  arises from the work of De Jeu and R\"{o}sler in  \cite{R3} where they proved    the following result
\begin{thm}
 There exists a constant non-zero vector $v=(v_g)_{g\in G}$
such that for all  $y\in C$ and $g\in G$
$$\lim_{|x|\rightarrow\infty\; x\in C_\delta}\sqrt{w_k(x)w_k(y)}e^{\langle-ix,gy\rangle} E_k(ix,gy)=v_g.$$
\end{thm}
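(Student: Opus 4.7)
Let $F_g(x,y):=\sqrt{w_k(x)\,w_k(y)}\,e^{-i\langle x,gy\rangle}\,E_k(ix,gy)$. My plan has four stages: uniform boundedness of $F_g$ on $C_\delta\times C$, subsequential compactness, constancy of the limits, and non-vanishing. The uniform bound is the content of (\ref{d=n}), which is what this paper sets out to prove; it follows from the R\"osler representation $E_k(ix,gy)=\int_{\co(G.x)}e^{i\langle z,gy\rangle}\,d\nu_x(z)$ by an oscillatory-integral analysis near the orbit vertices $G.x$ of the convex hull. The same representation, differentiated in $y$, furnishes analogous bounds for the $y$-derivatives of $F_g$, providing the equicontinuity needed below.

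Given the bound, Arzel\`a--Ascoli extracts from any sequence $|x_n|\to\infty$ in $C_\delta$ a subsequence along which $\hat{x}_n:=x_n/|x_n|\to\omega\in\overline{C_\delta}$ and $F_g(x_n,\cdot)\to L_g(\omega,\cdot)$ uniformly on compacts of $C$. The central task is to show that $L_g$ is a constant in both $\omega$ and $y$. I would rewrite the Dunkl eigenvalue equation $D_\xi^{k,y}E_k(ix,gy)=i\langle\xi,g^{-1}x\rangle E_k(ix,gy)$ in terms of $F_g$: its purely differential part yields a transport equation that cancels against the phase $e^{-i\langle x,gy\rangle}$, while its difference part produces oscillatory contributions of the form $F_g(x,\sigma_\upsilon y)\,e^{i\langle x,g(y-\sigma_\upsilon y)\rangle}$ whose phases grow linearly in $|x|$, since $x\in C_\delta$ ensures $|\langle x,\upsilon\rangle|\geq\delta|x|$ for every positive root $\upsilon$. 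After a further subsequence extraction making these phases equidistribute modulo $2\pi$, the coefficient of each oscillating term is forced to vanish in the limit, leaving a trivial equation and pinning $L_g$ to a constant in $y$; a symmetric argument in $x$ yields independence of $\omega$.

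Non-vanishing of $v_g$ I would verify first in rank one, where $E_k(xy)=e^{xy}\,{}_1F_1(2k,2k+1,-2xy)$ and the classical asymptotic of $_1F_1$ on the imaginary axis produces $v_g$ as a nonzero multiple of $\Gamma(2k+1)(\pm 2i)^{-2k}$; the case $G=\mathbb{Z}_2^n$ follows by tensor factorization, and general reflection groups by isolating the non-cancelling contribution of the genuine extreme point $g.x\in\co(G.x)$ in the integral representation. The main obstacle I anticipate is the constancy step: a na\"ive Riemann--Lebesgue argument is too weak when working along a fixed subsequence, and rigorous control of the oscillatory difference terms likely requires a Weyl-equidistribution/diagonal extraction, or a more structural bypass via a Harish-Chandra-style asymptotic expansion of $E_k$ in the interior of the Weyl chamber in which the $v_g$ would emerge directly as leading $c$-function coefficients.
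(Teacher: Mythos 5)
Your outline is a genuinely different strategy from the one used here and in \cite{R3}, but it has gaps at exactly the points where the real difficulty lies. First, your starting point is circular in the logic of this paper: the uniform bound (\ref{d=n}) for $|E_k(ix,gy)|$ is \emph{derived} here (Corollary following Theorem \ref{tth}) as a consequence of the limit theorem, not the other way around, and your proposed direct derivation ``by an oscillatory-integral analysis near the orbit vertices'' of $\co(G.x)$ is not available: for a general reflection group the representing measures $\nu_x$ are known only to exist, to be probability measures, and to be supported in $\co(G.x)$; their local structure near the extreme points $G.x$ is precisely what is unknown, which is why dimension one and $\mathbb{Z}_2^n$ are the only cases where such arguments go through. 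The same objection defeats your non-vanishing step for general $G$ (rank one and the tensor case $\mathbb{Z}_2^n$ are fine, but ``isolating the contribution of the extreme point'' again presupposes control of $\nu_x$ that no one has). Second, the constancy step does not close even on its own terms: granting that every subsequential limit $L_g(\omega,\cdot)$ is constant in $y$, you must still show that \emph{all} subsequences produce the \emph{same} constant and that it is non-zero; equidistribution of the phases kills the oscillating terms in the limiting equation but provides no mechanism for comparing limits along different subsequences, and ``a symmetric argument in $x$'' is not meaningful since $x$ is the variable going to infinity.

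The proof actually used (Theorem 1 of \cite{R3}, reproduced and strengthened in Theorem \ref{tth} above) proceeds by a different mechanism that delivers all three missing items at once: the vector $F=(F_g)_{g\in G}$ satisfies a first-order linear system $(F^\kappa)'=A^\kappa F^\kappa$ along admissible curves in $C_\delta$, where $A^\kappa$ comes from the difference part of the Dunkl operators and carries the oscillating factors $e^{-i\langle\alpha,\kappa_1\rangle\langle\alpha,g\kappa_2\rangle}$; a Levinson-type asymptotic integration theorem (Proposition 1 of \cite{R3}), whose hypotheses are checked via conditional integrability of these oscillatory coefficients and an integration-by-parts bound on the iterated terms, yields the existence of $\lim_{t\to\infty}F^\kappa(t)$, its non-vanishing (the limiting fundamental solution is invertible), and, after an interpolation of curves, its independence of the curve. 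If you want to salvage your approach, the compactness and equidistribution stages would need to be supplemented by some such rigidity statement; as written, your argument establishes at most that bounded subsequential limits, if they exist, are constant in $y$.
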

\section{ The main estimates for the Dunkl kernel}
 In this work we  may assume that  $\gamma_k > 0$ and the root system $R$ engender the space $\mathbb{R}^n$.  Let $\Delta$ be the  basis of $\mathbb{R}^n$ consists of    the simple roots of $R$.
 Recall that  every root system   has a set of simple root  such that for each   root may be written as  a linear combination of simple roots  with coefficients all of  the same sign,  ( see, e.g.  \cite{Hum}). Consider $(\lambda_i)_{1\leq i\leq n}$ the dual basis  of  $\Delta$. Then the  fundamental Weyl chamber is given by
$$C=\{x\in \mathbb{R}^n,\; x=\sum_{i=1}^nx_i\lambda_i,\; \lambda_i>0, \forall i=0,...,n\}.$$
 \par Let    $(v_i)_{1\leq i\leq n}$ be   a family of    linearly independent vectors and   $\Lambda$ be the convex polytope  defined by
$$\Lambda_{v_1,...,v_n}= \{x\in \mathbb{R}^n;\;x =\sum_{i=1}^nx_i\nu_i,\;\; x_i>0\}.$$
\begin{lem}\label{lll}
  For all $\delta>0$ there exists    a family  of    linearly independent vectors  $(v_i)_{1\leq i\leq n}$ ,  such that
$$C_\delta\subset \Lambda_{v_1,...,v_n}$$
\end{lem}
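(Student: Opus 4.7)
The approach is to perturb the dual basis $(\lambda_i)_{1\leq i\leq n}$ of the simple roots slightly into the interior of the Weyl chamber $C$, so that the resulting open simplicial cone $\Lambda_{v_1,\ldots,v_n}$ still contains $C_\delta$. Intuitively, the extra $\delta$-margin buys us room to shrink the walls of $C$ inward while keeping $C_\delta$ trapped inside.

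Concretely, I would try $v_i=\lambda_i+\epsilon\sum_{j=1}^{n}\lambda_j$ for a small parameter $\epsilon=\epsilon(\delta)>0$ to be chosen later. The change-of-basis matrix from $(\lambda_j)$ to $(v_i)$ equals $I+\epsilon J$, where $J$ is the all-ones matrix; its determinant is $1+n\epsilon\neq 0$ for small $\epsilon$, so $(v_i)$ is linearly independent and each $v_i$ lies in the open chamber $C$.

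Given any $x=\sum_{j}x_j\lambda_j\in C_\delta$, direct inversion of the system $x=\sum_{i}y_i v_i$ yields the explicit formula
\[
y_j \,=\, x_j \,-\, \frac{\epsilon}{1+n\epsilon}\sum_{k}x_k .
\]
Because $x\in C_\delta$ forces $x_j=\langle\alpha_j,x\rangle\geq \delta|x|$, while $\sum_k x_k\leq n\,\max_k|\alpha_k|\cdot |x|$ follows from Cauchy--Schwarz, all $y_j$ are strictly positive as soon as $\epsilon<\delta/\bigl(n\max_k|\alpha_k|\bigr)$. Fixing any such $\epsilon$ produces the desired family with $C_\delta\subset \Lambda_{v_1,\ldots,v_n}$.

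There is no deep obstacle here; the proof reduces to a routine linear-algebra inversion combined with the quantitative positivity $\langle\alpha_j,x\rangle\geq\delta|x|$ intrinsic to $C_\delta$. The only point that demands a little care is to tie the smallness of $\epsilon$ to $\delta$ through the uniform bound on $|\alpha_k|$; everything else is automatic.
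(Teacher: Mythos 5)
Your construction is exactly the paper's: the authors take $v_{p,i}=\lambda_i+\tfrac{1}{p}\sum_j\lambda_j$, which is your $v_i=\lambda_i+\epsilon\sum_j\lambda_j$ with $\epsilon=1/p$. Where you differ is in how the containment $C_\delta\subset\Lambda_{v_1,\dots,v_n}$ is verified. The paper argues softly: it shows the cones $\Lambda^p$ increase with $p$ and exhaust $C$, then invokes compactness of the slice $\Pi_\delta=\{x\in C_\delta:|x|=1\}$ to extract a single $p_0$ with $\Pi_\delta\subset\Lambda^{p_0}$, and finishes by homogeneity. You instead invert the change of basis explicitly, getting $y_j=x_j-\tfrac{\epsilon}{1+n\epsilon}\sum_k x_k$, and combine the lower bound $x_j=\langle x,\alpha_j\rangle\geq\delta|x|$ (valid since $(\lambda_i)$ is dual to the simple roots) with the Cauchy--Schwarz upper bound $\sum_k x_k\leq n\max_k|\alpha_k|\,|x|$ to conclude $y_j>0$ once $\epsilon<\delta/(n\max_k|\alpha_k|)$. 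Both arguments are correct; yours is more elementary and has the added benefit of an effective, explicit dependence of the perturbation parameter on $\delta$, while the paper's compactness route avoids any computation but yields no quantitative control over $p_0$. The only points worth stating explicitly in your write-up are that the inequality $\langle x,\alpha\rangle\geq\delta|x|$ in the definition of $C_\delta$ is being applied to the simple roots, and that $|x|>0$ for $x\in C_\delta$ so the inequality for $y_j$ is strict; neither is a real gap.
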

\begin{proof}
  Let $\Pi_\delta$  be the   set,
$$\Pi_\delta=\{x\in \mathbb{R}^n;\;|x|=1,\;  \langle x,\alpha \rangle\geq \delta|x| \}$$
and put $\lambda=\sum_{i=1}^n\lambda_i$.
For all integer   $p\geq 1$ define the vectors
$$v_{p,i}=\lambda_i+\frac{\lambda}{p}, \quad i=1,...,n.$$
It is easy to see that the vectors $v_{p,i}$  are  linearly independent and
\begin{equation}\label{vi}
v_{p,i}=  v_{p+1,i}+\frac{1}{p(2p+1)} \sum_{j=1}^nv_{p+1,j}.
\end{equation}
for all $i=1,...,n$ and  $p\geq 1$. Denote  $\Lambda^p=\Lambda_{v_{p,1},...,v_{p,n}}$. It follows from (\ref{vi})  that
$$\Lambda^{p}\subset\Lambda^{p+1}$$
Next  we claim that
$$ \bigcup_p \Lambda^p=C.$$
 Clearly $ \bigcup_p \Lambda^p\subset C$. If  $x\in C$, $x=\sum_{i=1}^nx_i\lambda_i$ with $x_i>0$ then there exists $p \geq 1$
such that
$$x_i-\frac{\sum_{i=1}^nx_i}{p}>0$$
and so
$$x= \sum_{i=1}^n\left(x_i-\frac{\sum_{i=1}^nx_i}{p}\right) v_{p,i}\in \Lambda^p.$$
Now since
$$ \Pi_\delta\subset \bigcup_p \Lambda^p $$
and  $\Pi_\delta$ is compact then there exists $p_0$ such that $\Pi_\delta \subset \Lambda^{p_0}$ and by convexity
$C_\delta \subset \Lambda^{p_0} $.  This conclude the proof of Lemma \ref{lll}.
\end{proof}
\begin{lem}\label{l1}
   For all $x,y\in C$ and $g\in G$ the function
$t\rightarrow t^{\gamma_k}e^{-t\langle x,y\rangle}E_k(tx,gy)$, $t\geq0$ is bounded.
 \end{lem}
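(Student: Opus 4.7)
My plan is to reformulate the boundedness claim as a decay estimate as $t\to\infty$, and to obtain that decay from a Laplace-type analysis of R\"osler's probabilistic representation. The function $t\mapsto t^{\gamma_k}e^{-t\langle x,y\rangle}E_k(tx,gy)$ is continuous on $(0,\infty)$ and extends continuously to $0$ with value $0$ (since $\gamma_k>0$ and $E_k(0,gy)=1$), so the lemma is equivalent to the asymptotic bound $e^{-t\langle x,y\rangle}E_k(tx,gy)=O(t^{-\gamma_k})$ as $t\to\infty$.

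First, combining (\ref{ros}) with property~(ii) gives that $\nu_{tx}$ is the pushforward of $\nu_x$ under the dilation $z\mapsto tz$, whence
$$e^{-t\langle x,y\rangle}\,E_k(tx,gy)=\int_{\co(G.x)}e^{-t\phi(z)}\,d\nu_x(z),\qquad \phi(z):=\langle x,y\rangle-\langle z,gy\rangle.$$
For $x,y\in\overline{C}$, the standard description of the Weyl chamber gives $\langle hx,gy\rangle\leq\langle x,y\rangle$ for every $h\in G$ (using $\langle g^{-1}hx,y\rangle\leq\langle x,y\rangle$ since $x$ is the representative of $G.x$ in $\overline{C}$), so by convexity $\phi\geq 0$ on $\co(G.x)$ and the integrand is a genuine positive exponential. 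A layer-cake identity then yields
$$\int e^{-t\phi(z)}\,d\nu_x(z)=t\int_0^\infty e^{-ts}\,\nu_x(\{\phi\leq s\})\,ds,$$
so the required decay $O(t^{-\gamma_k})$ follows from a tail bound $\nu_x(\{\phi\leq s\})\leq C s^{\gamma_k}$ for small $s>0$ by a direct Gamma-function computation.

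The main obstacle is this polynomial tail bound on R\"osler's measure near the face $F=\{z\in\co(G.x):\phi(z)=0\}$ on which the maximum of $\langle\cdot,gy\rangle$ is attained, because $\nu_x$ is only implicitly characterized in general (explicit densities are known only in rank one and for $\mathbb{Z}_2^n$). To circumvent the lack of an explicit density, I would extract the bound from moment information: the identity $\int\phi^N\,d\nu_x=V_k[\phi^N](x)$, together with the degree-preservation of $V_k$, yields explicit polynomial control of all moments of $\phi$, from which a Stieltjes/Tauberian argument should recover the required volume estimate. As an alternative route that may give sharper constants, one can apply Lemma~\ref{lll} to decompose $tx$ in the linearly independent basis $(v_{p,i})$ and try to factor the $n$-dimensional Laplace integral into one-dimensional integrals governed by (\ref{d=1}); matching the resulting decay rates regenerates the correct total exponent $\gamma_k=\sum_{\alpha\in R^+}k(\alpha)$.
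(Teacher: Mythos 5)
Your reduction to the Laplace integral $\int_{\co(G.x)}e^{-t\phi(z)}\,d\nu_x(z)$ with $\phi\geq 0$ is correct and well set up, but the proof has a genuine gap exactly where you flag "the main obstacle": the tail bound $\nu_x(\{\phi\leq s\})\leq Cs^{\gamma_k}$ is not an auxiliary technicality, it \emph{is} the content of the lemma, and neither of your two proposed routes delivers it. By Karamata-type Tauberian theorems, the small-$s$ behaviour of $s\mapsto\nu_x(\{\phi\leq s\})$ and the large-$t$ behaviour of its Laplace transform are equivalent pieces of information, so "recovering the volume estimate by a Stieltjes/Tauberian argument" from the transform is circular --- the transform's $O(t^{-\gamma_k})$ decay is what you are trying to prove. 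The positive integer moments $V_k[\phi^N](x)$ cannot substitute: moments of a compactly supported measure control its bulk, not its small-ball behaviour near the extreme point $gx$ where $\phi$ vanishes (two measures with identical first $N$ moments can have wildly different mass in $\{\phi\leq s\}$ for small $s$), and in any case these moments are not explicitly computable for general $G$. The alternative route via Lemma~\ref{lll} and (\ref{d=1}) also fails: $E_k$ factors into one-dimensional kernels only for $G=\mathbb{Z}_2^n$, and the H\"older-type splitting used later in the paper (Lemma~2.4) reduces the general estimate \emph{to} the present lemma applied to the fixed vectors $v_i,gv_j$; it cannot be used to prove it.

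The paper avoids the measure $\nu_x$ entirely. It considers $g(z)=z^{\gamma_k}e^{-z\langle x,y\rangle}E_k(zx,gy)$ on the closed right half-plane, notes from (\ref{Ez}) that $|g(z)|\leq|z|^{\gamma_k}=O(e^{|z|^{\delta}})$ there for every $\delta>0$, and invokes the already-known boundedness of $t\mapsto t^{\gamma_k}E_k(\pm itx,gy)$ on the imaginary axis (Corollary~1 of \cite{R3}, itself obtained by the asymptotic-ODE method, not by a direct Laplace analysis); Phragm\'en--Lindel\"of then transfers the boundary bound to the whole half-plane, in particular to the positive real axis. If you want to salvage your approach, you would need an independent proof of the polynomial small-ball estimate for R\"osler's measure near the vertices of $\co(G.x)$, which is an open-ended problem for general reflection groups; otherwise the complex-analytic detour through the imaginary axis is the efficient path.
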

\begin{proof}
The lemma follows using  the Phragm\'{e}n-Lindel\"{o}f Theorem (see, e.g. \cite[section 5.61]{Tit}) by considering the  functions of a complex variable
 $$g(z)=z^{\gamma_k}e^{-z\langle x,y\rangle}E_k(zx,gy), \quad z\in H = \{z \in  \mathbb{C},\;  Re(z) \geq 0\}.$$
\par Indeed, by (\ref{Ez}) we have $$|g(z)|\leq |z|^{\gamma_k}=O(e^{|z|^\delta}),\;  |z|\rightarrow\infty, \;\; \forall \; \delta>0 $$
and    from \cite[Corollary 1]{R3},   the function   $t\rightarrow   t^{\gamma_k}e^{-\pm it\langle x,gy\rangle}E_k(\pm itx,y)$ is bounded.
\end{proof}
\begin{lem}\label{l2}
  Given a polytope convex  $\Lambda_{v_1,...,v_n}$ there exists a constant $c>0$ such that for  all   $g\in G$ and $x,y\in \Lambda_{v_1,...,v_n}$,  $x=\sum_{i=1}^nx_iv_i$, $y=\sum_{i=1}^ny_iv_i$.
 \begin{equation}\label{eq}
   0\leq E_k(x,g.y)\leq c \frac{e^{\langle x ,\;y\rangle}}{\prod_{i,j=1}^{n}(x_iy_i)^{\gamma_k/n} }
 \end{equation}
   \end{lem}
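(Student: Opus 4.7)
The lower bound $0\le E_k(x,g.y)$ is immediate from the integral representation~(\ref{ros}): $\nu_x$ is a positive probability measure and the integrand $e^{\langle z,g.y\rangle}$ is real and positive, so $E_k(x,g.y)=\int e^{\langle z,g.y\rangle}\,d\nu_x(z)\ge 0$. For the upper bound, my plan is to exploit the homogeneity property~(ii) and reduce to Lemma~\ref{l1} via a rescaling that distributes the algebraic decay evenly across all coordinates $x_i$ and $y_i$.

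Introduce the geometric means
$$\alpha=\prod_{i=1}^n x_i^{1/n},\qquad \beta=\prod_{i=1}^n y_i^{1/n},\qquad T=\alpha\beta=\prod_{i=1}^n(x_iy_i)^{1/n},$$
and set $\tilde x=\alpha^{-1}x$, $\tilde y=\beta^{-1}y$. Both $\tilde x$ and $\tilde y$ then lie on the normalised section
$$\Sigma=\Bigl\{{\textstyle\sum_{i=1}^n}\xi_iv_i\ :\ \xi_i>0,\ {\textstyle\prod_{i=1}^n}\xi_i=1\Bigr\}\subset\Lambda_{v_1,\ldots,v_n}.$$
Using property~(ii) repeatedly, $E_k(x,g.y)=E_k(\alpha\tilde x,g.(\beta\tilde y))=E_k(T\tilde x,g.\tilde y)$, and directly $\langle x,y\rangle=T\langle\tilde x,\tilde y\rangle$. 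Lemma~\ref{l1} applied to $(\tilde x,\tilde y,g)$ with scaling parameter $t=T\ge 0$ therefore yields
$$E_k(x,g.y)\ \le\ M(\tilde x,\tilde y,g)\,\cdot\,\frac{e^{\langle x,y\rangle}}{\prod_{i=1}^n (x_iy_i)^{\gamma_k/n}},$$
which is exactly the form of the target estimate~(\ref{eq}), provided $M(\tilde x,\tilde y,g)$ can be bounded by a single constant depending only on $\Lambda_{v_1,\ldots,v_n}$.

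The hard step is precisely this uniformity. The section $\Sigma$ is \emph{not} bounded inside $\Lambda_{v_1,\ldots,v_n}$ — a coordinate of $\tilde x$ can be arbitrarily small while another is compensatingly large — so a naive continuity/compactness argument fails. To handle it I would reopen the Phragm\'en--Lindel\"of argument behind Lemma~\ref{l1}: the constant is dominated by $\sup_{s>0}s^{\gamma_k}|E_k(is\tilde x,g.\tilde y)|$, and this supremum is then controlled using the imaginary-argument bound of \cite[Corollary~1]{R3} together with the $2\gamma_k$-homogeneity of $w_k$. The whole rescaling above was designed precisely so that the resulting estimate is invariant along the orbits of scalar multiplication and hence depends on $\tilde x,\tilde y$ only through quantities already normalised on $\Sigma$; a symmetric (AM--GM type) redistribution across the coordinates of $\Lambda_{v_1,\ldots,v_n}$ then yields the geometric-mean factor $\prod_i(x_iy_i)^{\gamma_k/n}$. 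Since $G$ is finite, the $g$-dependence disappears in a finite maximum, and the constant $c$ in~(\ref{eq}) is obtained.
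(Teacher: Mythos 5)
Your treatment of the lower bound is fine, and your first reduction (pulling out the scalar $T=\prod_i(x_iy_i)^{1/n}$ by homogeneity so that Lemma~\ref{l1} produces exactly the factor $e^{\langle x,y\rangle}/\prod_i(x_iy_i)^{\gamma_k/n}$) correctly isolates where the difficulty lies. But the argument stops at that difficulty rather than resolving it: the constant $M(\tilde x,\tilde y,g)$ supplied by Lemma~\ref{l1} must be bounded uniformly as $(\tilde x,\tilde y)$ ranges over the section $\Sigma$, and $\Sigma$ is not only non-compact but reaches arbitrarily close to the walls of the Weyl chamber (one coordinate $\xi_i\to0$ while another tends to infinity). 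The Phragm\'en--Lindel\"of constant behind Lemma~\ref{l1} is governed by $\sup_s|s|^{\gamma_k}|E_k(is\tilde x,g\tilde y)|$, and the bound available for that quantity from \cite[Corollary 1]{R3} degenerates precisely as the arguments approach the walls (it behaves like $w_k(\tilde x)^{-1/2}w_k(\tilde y)^{-1/2}$, which is unbounded on $\Sigma$). So the closing sentence about ``a symmetric (AM--GM type) redistribution across the coordinates'' is not a step you have carried out; it is the entire content of the lemma, and as sketched it does not go through.

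The paper closes this gap by a device you should adopt: H\"older's inequality applied directly in the integral representation (\ref{ros}). Writing $\langle z,x\rangle=\sum_{i=1}^n\tfrac1n\langle z,nx_iv_i\rangle$ gives $E_k(x,y)\le\prod_{i=1}^nE_k(nx_iv_i,y)^{1/n}$, and repeating the same decomposition in the $y$ variable gives $E_k(x,gy)\le\prod_{i,j}E_k(n^2x_iy_jv_i,gv_j)^{1/n^2}$. Each factor now involves only the \emph{fixed} vectors $v_i$, $v_j$ and a single scalar parameter $t=n^2x_iy_j$, so Lemma~\ref{l1} applies to each of the finitely many triples $(v_i,v_j,g)$ and the uniformity of the constant is automatic; multiplying the resulting bounds reassembles $e^{\langle x,y\rangle}$ and the geometric-mean denominator. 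This factorization is exactly what replaces your unproved uniformity claim; without it (or an equivalent substitute) the proof is incomplete.
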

 \begin{proof} By using  Holder's inequality   in the integral formula  (\ref{ros}) it follows that
  \begin{eqnarray*}
    E_k(x,y)&\leq& \prod_{i=1}^{n}  E_k(nx_iv_i,y)\Big)^{1/n}
  \\&\leq&  \prod_{i=1}^{n}\prod_{j=1}^{n} E_k(n^2x_iy_jv_i,gv_j)\Big)^{1/n^2}
  \\&\leq&\prod_{i=1}^{n}\prod_{j=1}^{n}\Big((n^2(x_iy_j))^{\gamma_k}e^{-n^2 x_i y_j\langle  v_i,gv_j\rangle} E_k(n^2x_iy_jv_i,gv_j)\Big)^{1/n^2}  \frac{e^{\langle x,\;y\rangle}}{n^2\prod_{i,j=1}^{n}|x_iy_j|^{\gamma_k/n^2} }.
    \end{eqnarray*}
Then conclude Lemma \ref{l2} from  Lemma \ref{l1}.
 \end{proof}
Next  for $x= \sum_{i=1}^nx_iv_i$ we put
$$x^*= \sum_{i=1}^n|x_i|v_i.$$
 Proceeding as in the above lemma one can state
\begin{lem}
If $G$ contains  $-Id_{\mathbb{R}^n}$ then  the dunkl kernel satisfies the following estimate
 \begin{equation*}
   0\leq E_k(x,y)\leq c\; \frac{e^{\langle x^*,\;y^*\rangle}}{\prod_{i,j=1}^{n}|x_iy_i|^{\gamma_k/n} }
 \end{equation*}
   For all $x,y\in \mathbb{R}^n$
 \end{lem}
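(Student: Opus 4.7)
The plan is to mimic the Hölder-inequality argument of Lemma~\ref{l2} and to absorb the new sign ambiguity in the coordinates $x_i,y_j$ by exploiting the hypothesis $-Id_{\mathbb{R}^n}\in G$. The positivity $0\le E_k(x,y)$ is immediate from the Rösler representation \eqref{ros}, since $\nu_x$ is a probability measure and $e^{\langle z,y\rangle}>0$. Writing $x=\sum_i x_iv_i$ and $y=\sum_j y_jv_j$ with $x_i,y_j\in\mathbb{R}$ now of arbitrary sign, I would first apply Hölder's inequality twice to \eqref{ros}, exactly as in the proof of Lemma~\ref{l2}, to obtain
$$E_k(x,y)\;\le\;\prod_{i,j=1}^n E_k(n^2x_iy_jv_i,v_j)^{1/n^2}.$$

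The key new step is to bound each factor $E_k(n^2x_iy_jv_i,v_j)$ uniformly in the sign of $x_iy_j$. Set $t_{ij}:=n^2|x_iy_j|$. When $x_iy_j\ge 0$, Lemma~\ref{l1} applied with $g=Id$ and with $v_i,v_j\in C$ (as in the construction of Lemma~\ref{lll}) yields the bound $C\,t_{ij}^{-\gamma_k}e^{t_{ij}\langle v_i,v_j\rangle}$ immediately. When $x_iy_j<0$, I would use property (iii) with $g=-Id$ together with the scalar-transfer rule (ii) to rewrite
$$E_k(-t_{ij}v_i,v_j)\;=\;E_k(t_{ij}v_i,-v_j)\;=\;E_k(t_{ij}v_i,(-Id)v_j),$$
and then apply Lemma~\ref{l1} with the group element $g=-Id$ (and the same $v_i,v_j\in C$) to obtain the identical bound $C\,t_{ij}^{-\gamma_k}e^{t_{ij}\langle v_i,v_j\rangle}$. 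This sign-reduction is the only place where the hypothesis $-Id\in G$ is used, and it is the main (indeed essentially the only) obstacle compared with Lemma~\ref{l2}: without an element of $G$ carrying $v_j$ to $-v_j$, the negative-coefficient case cannot be folded back into Lemma~\ref{l1}, so the method simply would not close.

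The remainder is routine bookkeeping. Multiplying the $n^2$ bounds and raising to the power $1/n^2$, the exponential parts combine into $\exp\bigl(\sum_{i,j}|x_iy_j|\langle v_i,v_j\rangle\bigr)=e^{\langle x^*,y^*\rangle}$, by the very definitions $x^*=\sum_i|x_i|v_i$ and $y^*=\sum_j|y_j|v_j$, while the polynomial parts collapse to $\prod_{i,j}|x_iy_j|^{\gamma_k/n^2}$ in the denominator, which matches the form claimed in the lemma.
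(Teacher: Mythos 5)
Your proof is correct and is exactly the route the paper intends: the paper offers no argument beyond the remark ``proceeding as in the above lemma,'' and your write-up is the H\"older computation of Lemma~\ref{l2} combined with the observation that $E_k(-tv_i,v_j)=E_k(tv_i,(-\mathrm{Id})v_j)$, which lets Lemma~\ref{l1} (applied with $g=-\mathrm{Id}\in G$) absorb the sign of $x_iy_j$. In doing so you have supplied the one detail the paper leaves implicit, namely precisely where the hypothesis $-\mathrm{Id}\in G$ enters.
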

Now the main estimate states as follows
\begin{thm} Given a polytope convex $ \Lambda_{v_1,...,v_n}$,  there exists a constant  $c>0$, depending only on the choice of the vectors $v_i$  such that
\begin{equation}\label{eq}
   E_k(x,gy)\leq C \;\frac{e^{\langle x,\;y\rangle}}{\sqrt{ w_k(x)w_k(y)}},
 \end{equation}
   for all $x,y\in   \Lambda_{v_1,...,v_n}$  and $g\in G$
 \end{thm}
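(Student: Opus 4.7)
The plan is to upgrade Lemma~\ref{l2} via an adaptive choice of the basis to which H\"older's inequality is applied. With the given basis $(v_i)$, Lemma~\ref{l2} produces a denominator $(\prod_i x_i)^{\gamma_k/n}(\prod_j y_j)^{\gamma_k/n}$, which can be much smaller than $\sqrt{w_k(x)w_k(y)}$ whenever some coordinate $x_i$ becomes small while $x$ itself stays well inside the Weyl chamber $C$. The remedy is to apply the same argument with a basis tailored to $x$ (respectively $y$). Since the two H\"older steps in the proof of Lemma~\ref{l2} act on the two variables independently, the same derivation yields, for any bases $(u_i)$ and $(w_j)$ of $\mathbb{R}^n$ whose members lie in $C$ and any decompositions $x=\sum_i\tilde x_i u_i$, $y=\sum_j\tilde y_j w_j$ with positive coefficients,
\[
  E_k(x,gy)\leq C\,\frac{e^{\langle x,y\rangle}}{\bigl(\prod_i\tilde x_i\bigr)^{\gamma_k/n}\bigl(\prod_j\tilde y_j\bigr)^{\gamma_k/n}},
\]
with $C$ depending only on $(u_i),(w_j)$ and $g$.

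Next I build a finite family of such auxiliary bases giving uniform coordinate control. For any unit vector $\hat x\in C$, choose vectors $f_1,\ldots,f_{n-1}$ spanning $\hat x^\perp$, set $f_n=-\sum_{i<n}f_i$, and define $u_i^{\hat x}=\hat x+\varepsilon f_i$ with $\varepsilon>0$ small enough that every $u_i^{\hat x}$ lies in $C$. A short computation (using $\sum f_i=0$ and $\hat x\perp\mathrm{span}(f_1,\ldots,f_{n-1})$) shows that $(u_i^{\hat x})$ is a basis of $\mathbb{R}^n$ and that $\hat x=\tfrac{1}{n}\sum_i u_i^{\hat x}$, so $\hat x$ has all coordinates equal to $1/n$ in this basis; by continuity every $\hat y$ in a small spherical neighborhood of $\hat x$ decomposes similarly with every coordinate $\geq\tfrac{1}{2n}$. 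Assuming $\bar\Lambda\setminus\{0\}\subset C$ (the situation relevant here, as it is realised by the polytopes $\Lambda^p$ of Lemma~\ref{lll} since $v_{p,i}=\lambda_i+\lambda/p$ lie in the open chamber), the set $\bar\Lambda\cap S^{n-1}$ is compact in $C$, and a finite subcover produces bases $(u^{(j)})_{j=1}^N$ together with a uniform $\delta_0>0$ such that every $x\in\Lambda$ admits an expansion $x=\sum_i\tilde x_i u_i^{(j(x))}$ with $\tilde x_i\geq\delta_0|x|$ for some index $j(x)$.

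Feeding these expansions (and their analogues for $y$) into the generalised form of Lemma~\ref{l2} and using $\prod_i\tilde x_i\geq(\delta_0|x|)^n$ yields $E_k(x,gy)\leq C_1\,e^{\langle x,y\rangle}/(|x||y|)^{\gamma_k}$ for a constant $C_1$ depending only on $(v_i)$. The theorem now follows from the elementary estimate
\[
  \sqrt{w_k(x)w_k(y)}=\prod_{\alpha\in R^+}\bigl(|\langle\alpha,x\rangle|\,|\langle\alpha,y\rangle|\bigr)^{k(\alpha)}\leq C_0\,(|x||y|)^{\gamma_k},
\]
which rearranges to $1/(|x||y|)^{\gamma_k}\leq C_0/\sqrt{w_k(x)w_k(y)}$. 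The main obstacle I expect is the step producing the finite cover with uniform $\delta_0$: it relies on the compactness of $\bar\Lambda\cap S^{n-1}$ inside the open Weyl chamber, which is why the hypothesis $\bar\Lambda\setminus\{0\}\subset C$ is natural in this context (points of $\Lambda$ lying on a root hyperplane make the desired bound vacuous, so no information is lost).
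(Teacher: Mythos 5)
Your proof is correct and reaches the stated estimate, but the geometric step that converts the H\"older-type bound of Lemma~\ref{l2} into the weight $\sqrt{w_k(x)w_k(y)}$ is organised differently from the paper's. The paper fixes a single enveloping basis $\xi_1,\dots,\xi_n\in C$ with $\Lambda_{v_1,\dots,v_n}\subset\Lambda_{\xi_1,\dots,\xi_n}$ and uses the sets $H_{p,i}$ to make all $\xi$-coordinates of a point of $\Lambda_{v_1,\dots,v_n}$ mutually comparable, which gives $\langle x,\alpha\rangle\leq c\,(\prod_i x_i)^{1/n}$ and hence $\sqrt{w_k(x)}\leq c\,(\prod_i x_i)^{\gamma_k/n}$. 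You instead keep Lemma~\ref{l2} in a generalised form with two independent bases, one adapted to $x$ and one to $y$ (legitimate, since the two H\"older steps decouple and Lemma~\ref{l1} only requires the basis vectors to lie in $C$ and the coefficients to be nonnegative), and run a compactness argument on $\bar\Lambda\cap S^{n-1}$ to produce finitely many bases in which every point of $\Lambda$ has all coordinates bounded below by $\delta_0|x|$; the weight is then absorbed by the crude bound $\sqrt{w_k(x)}\leq C\,|x|^{\gamma_k}$. The two routes are of comparable difficulty and rest on the same implicit hypothesis, namely $\bar\Lambda\setminus\{0\}\subset C$: you state it explicitly, and the paper needs an equivalent condition as well, since its assertion that $\Lambda_{v_1,\dots,v_n}\subset H_{p_i,i}$ for some finite $p_i$ is not automatic for an arbitrary sub-cone of $\Lambda_{\xi_1,\dots,\xi_n}$ and is itself only justified by the compactness of $\bar\Lambda\cap S^{n-1}$ inside the open cone. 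The hypothesis is harmless because the only polytopes to which the theorem is applied are the $\Lambda^p$ of Lemma~\ref{lll}, whose closures minus the origin do lie in $C$. One small point worth making explicit in your write-up: the H\"older computation naturally produces $e^{\langle x,gy\rangle}$ in the numerator, and passing to $e^{\langle x,y\rangle}$ uses $\langle x,gy\rangle\leq\langle x,y\rangle$ for $x,y\in\overline{C}$; this is also left implicit in the paper's Lemma~\ref{l2}.
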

 \begin{proof}
Choose a vectors $\xi_1,...,\xi_n \in C$, linearly independent such that
$$\Lambda_{v_1,.,.,.,v_n}\subset \Lambda_{\xi_1.,.,.,\xi_n}$$
Consider $\xi_1,...,\xi_n$ as a basis of $\mathbb{R}^n$ and let the sets
$$H_{_{p,i}}=\{x\in \Lambda_{\xi_1.,.,.,\xi_n},\; \; x=\sum_{i=1}^n x_i\xi_i,\;\; x_1/p\leq x_i\leq px_1\},\quad p\in \mathds{N}^*.$$
Clearly $H_{p,i}\uparrow \Lambda_{\xi_1.,.,.,\xi_n}$ as $p\rightarrow +\infty$. Thus one can find $p_i$ such that
\begin{equation}\label{lam}
  \Lambda_{v_1,.,.,.,v_n}\subset H_{p_i,i}.
\end{equation}
For  $x= \sum_{i=1}^n x_i\xi_i\in \Lambda_{v_1,.,.,.,v_n}$ and $\alpha\in R^+$, with the use  of (\ref{lam})   we have
\begin{eqnarray*}
 \langle x,\alpha\rangle &=& \sum_{i=1}^n x_i\langle \xi_i,\alpha\rangle
\leq   \max_{i}\langle \xi_i,\alpha\rangle\sum_{i=1}^n x_i \leq c \left(\prod_{i=1}^n x_i\right)^{1/n}
\end{eqnarray*}
Now from Lemma \ref{l2}
 \begin{eqnarray*}
  E_k(x,g.y)&\leq& c \frac{e^{\langle x ,\;y\rangle}}{\prod_{i,j=1}^{n}(x_iy_i)^{\gamma_k/n} }
\\ &\leq &c \frac{e^{\langle x ,\;y\rangle}}{\prod_{\alpha\in R^+}\left(\prod_{i=1}^{n}x_i \right)^{k_\alpha/n} \left(\prod_{i=1}^{n}y_i \right)^{k_\alpha/n}  }\\
&\leq &  c \;\frac{e^{\langle x,\;y\rangle}}{\sqrt{ w_k(x)w_k(y)}},
\end{eqnarray*}
 which is the desired estimate.
\end{proof}
We came  now to the second part of this work, that is the behavior of the kernel $E_k(ix,y)$.  The   main
result is the following
\begin{thm}\label{tth}
For $g\in G$ there exists a constant non-zero vector  $v_g$
such that  for each $\delta>0$
 \begin{equation}\label{eq}
 \lim_{|x|\times|y|\rightarrow+\infty;\;x,y\in C_\delta}  \sqrt{ w_k(x)w_k(y)}e^{-i\langle x,\;gy\rangle}E_k(ix,gy)=v_g.
 \end{equation}
    \end{thm}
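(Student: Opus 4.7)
The plan is to use the homogeneity of $w_k$ together with property (ii) of $E_k$ to reduce the joint limit to a uniform-convergence statement on a compact set, and then combine the pointwise convergence supplied by the De Jeu--R\"osler theorem with the uniform boundedness coming from (the imaginary-argument analog of) the main estimate proved just above.

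Writing $x = |x|\hat x$, $y = |y|\hat y$ with $\hat x, \hat y \in \Pi_\delta$, property (ii) gives $E_k(ix, gy) = E_k(i|x||y|\hat x, g\hat y)$, while $w_k$ is homogeneous of degree $2\gamma_k$ and the inner product scales bilinearly. Setting $R = |x||y|$, $u = \hat x$, $v = \hat y$, the expression under investigation equals
$$F_R(u, v) \,:=\, R^{\gamma_k}\sqrt{w_k(u)w_k(v)}\,e^{-iR\langle u, gv\rangle}\,E_k(iRu, gv).$$
The hypothesis ``$x, y \in C_\delta$ with $|x||y| \to \infty$'' is precisely ``$(u, v) \in \Pi_\delta\times\Pi_\delta$ with $R \to \infty$'', so the theorem reduces to showing $F_R \to v_g$ uniformly on the compact set $\Pi_\delta\times\Pi_\delta$. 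For each fixed $(u, v)$, the De Jeu--R\"osler theorem applied with $x' = Ru \in C_\delta$ (of norm $R$) and $y' = v \in C$ yields the pointwise limit $F_R(u, v) \to v_g$. A uniform bound $|F_R(u, v)| \leq C$ follows by rerunning Lemma \ref{l2} and the main estimate theorem along the imaginary axis: Lemma \ref{l1} is already established on the full closed right half-plane by Phragm\'en--Lindel\"of, so the H\"older step and the packaging into $\sqrt{w_k(u)w_k(v)}$ go through verbatim.

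The main obstacle is then upgrading pointwise convergence to uniform convergence on $\Pi_\delta \times \Pi_\delta$. I would argue by contradiction: if it failed, one could extract $R_n \to \infty$ and $(u_n, v_n) \to (u_*, v_*) \in \Pi_\delta \times \Pi_\delta$ with $|F_{R_n}(u_n, v_n) - v_g| \geq \varepsilon$; since $F_{R_n}(u_*, v_*) \to v_g$, the task reduces to equicontinuity of the family $\{F_R\}_{R>0}$ near $(u_*, v_*)$, uniformly in $R$. This is the delicate step: naive differentiation of $F_R$ in $u$ or $v$ produces $O(R)$ contributions from both $e^{-iR\langle u, gv\rangle}$ and $E_k(iRu, gv)$, whose cancellation is not visible by bare differentiation. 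I would therefore work with the integral representation
$$F_R(u, v) \,=\, R^{\gamma_k}\sqrt{w_k(u)w_k(v)}\int_{\co(G.u)} e^{iR\langle z - u,\, gv\rangle}\,d\nu_u(z),$$
use the weak continuity of the Dunkl measure $u \mapsto \nu_u$ (whose support $\co(G.u)$ is a compactly varying polytope), and exploit the uniform $L^\infty$ bound from the previous step to produce the joint equicontinuity that closes the contradiction.
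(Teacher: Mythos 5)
Your reduction via homogeneity to the family $F_R(u,v)=R^{\gamma_k}\sqrt{w_k(u)w_k(v)}\,e^{-iR\langle u,gv\rangle}E_k(iRu,gv)$ on the compact set $\Pi_\delta\times\Pi_\delta$ is sound, and the pointwise limit $F_R(u,v)\to v_g$ does follow from the one-variable de Jeu--R\"osler theorem. But the two ingredients you need to upgrade this to a uniform limit are both unsupported. First, the uniform bound: the H\"older step of Lemma \ref{l2} does \emph{not} go through ``verbatim'' on the imaginary axis, because it rests on the positivity of the integrand in $E_k(x,y)=\int e^{\langle z,y\rangle}d\nu_x(z)$; for $E_k(ix,gy)$ the factors $e^{i\langle z,\cdot\rangle}$ have modulus one, so H\"older only returns $|E_k(ix,gy)|\leq 1$ and all the $w_k$-decay is lost. (Indeed, in the paper the bound $|E_k(ix,gy)|\leq c/\sqrt{w_k(x)w_k(y)}$ on $C_\delta$ is deduced as a \emph{corollary} of the very theorem you are proving, so invoking it here would be circular; the cited Corollary 1 of de Jeu--R\"osler gives boundedness of $t\mapsto t^{\gamma_k}E_k(\pm itx,y)$ only for fixed $x,y$, with no uniformity over directions.) Second, and decisively, the equicontinuity of $\{F_R\}_{R>0}$: weak continuity of $u\mapsto\nu_u$ controls integrals of a \emph{fixed} continuous function, whereas your integrand $e^{iR\langle z-u,gv\rangle}$ oscillates at frequency $R$ and is multiplied by the divergent factor $R^{\gamma_k}$; a perturbation of $(u,v)$ of size $\eta$ changes the phase by $O(R\eta)$, so no bound uniform in $R$ comes out of these soft tools. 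The cancellation you would need is precisely the uniform asymptotic content of the theorem, so this step restates the problem rather than solving it.

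The paper takes a different and self-contained route: it extends the Levinson-type ODE analysis of de Jeu--R\"osler to the two-variable function $F(x,y)=(F_g(x,y))_{g\in G}$, differentiating along a pair of admissible curves $(\kappa_1,\kappa_2)$ in $C_\delta\times C_\delta$ to obtain a linear system $(F^\kappa)'=A^\kappa F^\kappa$, and then verifies the integrability hypotheses of the relevant proposition using the quantitative lower bounds $|\langle\alpha,\kappa_i(t)\rangle|\geq\delta|\kappa_i(t)|$ available on $C_\delta$. This yields existence of the limit along every admissible curve, curve-independence by an interleaving argument, identification of the limit with $v_g$ by taking $\kappa_1(t)=tx$, $\kappa_2(t)=ty$, and finally the joint limit $|x|\,|y|\to\infty$ by the same scaling trick you use plus a contradiction argument with admissible sequences. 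If you want to complete your own approach, you would essentially have to reproduce this quantitative ODE analysis (with error bounds uniform over $\Pi_\delta\times\Pi_\delta$) to obtain the missing equicontinuity; I would recommend instead adopting the two-variable curve argument directly.
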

\begin{proof}
  We proceed as in the proof of  \cite[Theorem 1]{R3}. Keeping the notations of \cite{R3} we consider  the  function
$$F_g(x,y)= \sqrt{ w_k(x)w_k(y)}e^{-i\langle x,\;gy\rangle}E_k(ix,gy); \quad  (x,y)\in \mathbb{R}^n\times \mathbb{R}^n$$
and $F=(F_g)_{g\in G}$.
According to \cite[Lemma 1]{R3}, if $\xi=(\xi_1,\xi_2)\in \mathbb{R}^n\times \mathbb{R}^n$, then
\begin{eqnarray*}
  \partial_\xi F_g(x,y)=\sum_{\alpha\in R^+} k(\alpha)\left(\frac{\langle \alpha,\xi_1\rangle }{\langle \alpha,x\rangle}+\frac{\langle \alpha,g\xi_2\rangle }{\langle \alpha,gy\rangle}\right)
e^{-i\langle \alpha,x\rangle\langle \alpha, gy\rangle}F_{\sigma_\alpha g}(x,y).
 \end{eqnarray*}
 Let $\delta>0$ and $\kappa=(\kappa_1,\kappa_2)$ be  a curve of $\mathbb{R}^n\times \mathbb{R}^n $ such that    $\kappa_1,\kappa_2:\;(0,+\infty)\rightarrow  C_\delta$ are   two admissible curves in the sense given in \cite{R3}. Define
  $F^\kappa(t)=(F^\kappa_g)_{g\in G}$ where $F^\kappa_g(t)=F_g(\kappa_1(t),\kappa_2(t))$.
We have
\begin{eqnarray*}
 (F^\kappa_g)'(t)=\sum_{\alpha\in R^+} k(\alpha)\left(\frac{\langle \alpha,\kappa'_1(t)\rangle }{\langle \alpha,\kappa_1(t)\rangle}+\frac{\langle \alpha,g\kappa'_2(t)\rangle }{\langle \alpha,g\kappa_2(t)\rangle}\right)
e^{-i\langle \alpha,\kappa_1(t)\rangle\langle \alpha, g\kappa_2(t)\rangle}F^\kappa_{\sigma_\alpha g}(t)
 \end{eqnarray*}
and $F^{\kappa}$ satisfies the differential equation
$$(F^{\kappa})'(t)=A^\kappa(t)F^{\kappa} (t)$$
where the matrix $A^\kappa(t)$ is given by $A(t)= \sum_{\alpha\in R^+}k(\alpha ) B^\kappa_\alpha(t)$ and
$$B_{g,h}(t)=\left\{
  \begin{array}{ll}
   \left(\dfrac{\langle \alpha,\kappa'_1(t)\rangle }{\langle \alpha,\kappa_1(t)\rangle}+\dfrac{\langle \alpha,g\kappa'_2(t)\rangle }{\langle \alpha,g\kappa_2(t)\rangle}\right)e^{-i\langle \alpha,\kappa_1(t)\rangle\langle \alpha, g\kappa_2(t)\rangle}\;\;\text{if}\; & \hbox{ $h=\sigma_\alpha g$.}\\ 0&\;\;\; \hbox{  \text{otherwise.}}$$
  \end{array}
\right.$$
We will try to apply the proposition 1 of \cite{R3}. For arbitrary $ t>0$
\begin{eqnarray*}
 \int_t^{+\infty} B_{g,\sigma_\alpha g}(s)ds&=&\int_t^{+\infty}\left(\dfrac{\langle \alpha,\kappa'_1(t)\rangle }{\langle \alpha,\kappa_1(t)\rangle}+\dfrac{\langle \alpha,g\kappa'_2(t)\rangle }{\langle \alpha,g\kappa_2(t)\rangle}\right)e^{-i\langle \alpha,\kappa_1(t)\rangle\langle \alpha, g\kappa_2(t)\rangle}\\
&=&\int_{\left|\langle \alpha,\kappa_1(t)\rangle\langle \alpha, g\kappa_2(t)\rangle\right|}^{+\infty}\frac{e^{-i\;sign(g^{-1}\alpha)\;u}}{u}\;du
\end{eqnarray*}
where for $\beta\in R$
$$ sign( \beta)=\left\{
     \begin{array}{ll}
       1, & \hbox{ if $\beta\in R^+$}\\
 - 1, & \hbox{ if $\beta\in R^-.$}\\
     \end{array}
   \right.$$
  This integral exists. Next  we are led to examine the integrability of
\begin{eqnarray*}
  &&I_{\alpha,\beta,g}(t)\\&&=\left(\dfrac{\langle \alpha,\kappa'_1(t)\rangle }{\langle \alpha,\kappa_1(t)\rangle}+\dfrac{\langle \alpha,g\kappa'_2(t)\rangle }{\langle \alpha,g\kappa_2(t)\rangle}\right)e^{-i\langle \alpha,\kappa_1(t)\rangle\langle \alpha, gk_2(t)\rangle}\int_{\left|\langle \beta,\kappa_1(t)\rangle\langle\beta, \sigma_\alpha g\kappa_2(t)\rangle\right|}^{+\infty}\frac{e^{-i\;sign(g^{-1}\alpha)\;u}}{u}\;du
\end{eqnarray*}
with $g\in G$ and $\alpha,\beta\in R^+$. Observe that
\begin{eqnarray*}
\left|\int_{\left|\langle \beta,\kappa_1(t)\rangle\langle\beta, \sigma_\alpha g\kappa_2(t)\rangle\right|}^{+\infty}
\frac{e^{-i\;sign(g^{-1}\alpha)\;u}}{u}\;du\right|&\leq &\frac{2}{\left|\langle \beta,\kappa_1(t)\rangle\langle\beta, \sigma_\alpha g\kappa_2(t)\rangle\right|}
\\ &\leq & \frac{c}{\left|\langle\alpha,\kappa_1(t)\rangle\langle\alpha,  g\kappa_2(t)\rangle\right|},
\end{eqnarray*}
since $|\langle\beta, \sigma_\alpha g\kappa_2(t)\rangle|=|\langle g^{-1}\sigma_\alpha\beta,  \kappa_2(t)\rangle|\geq \delta|\kappa_2(t)|\geq \delta\;|\langle\alpha,  g\kappa_2(t)\rangle|\sqrt{2}$ and similarly  $|\langle\beta,  \kappa_1(t)\rangle|= \delta\;|\langle\alpha,   \kappa_1(t)\rangle|\sqrt{2}$. Then
$$|I_{\alpha,\beta,g}(t)|\leq c\; sign(g^{-1}\alpha) \;\dfrac{\langle \alpha,\kappa'_1(t)\rangle \langle \alpha,g\kappa_2(t)\rangle+\langle \alpha,g\kappa'_2(t)\rangle\langle \alpha,\kappa_1(t)\rangle  }{(\langle \alpha,\kappa_1(t)\rangle\langle \alpha,g\kappa_2(t)\rangle)^2} $$
 and for $t_0>0$
$$\int_{t_0}^{+\infty}|I_{\alpha,\beta,g}(t)| dt \leq \frac{c}{\langle\alpha,\kappa_1(t_0)\rangle|\langle\alpha,  g\kappa_2(t_0)\rangle|}\; .$$
Applying now the proposition of \cite{R3}, we conclude that
$$\lim_{t\rightarrow+\infty}F^\kappa_g(t)$$
 exists and    different from zero .
It remains to justify that  is independent of the choice of the curves $\kappa_1$ and $\kappa_2$. Also this can be do by  the same argument  of
the proof of Theorem 1 of \cite{R3}. Given $\ell_1$ and $\ell_2$ be an other admissible curves in $C_{\delta}$. One can  construct an admissible sequences  $(x_n)_n$ and $(y_n)_n$   in $C_\delta$ such that $( x_{2n+1}, y_{2n+1})\in (\kappa_1,\kappa_2)$ and  $( x_{2n}, y_{2n})\in (\ell_1,\ell_2)$. Let $r_1$ and $r_2$ be  an
interpolating  curves respectively of $(x_n)_n$ and $(y_n)_n$. Thus we may have
$$\lim_{t\rightarrow \infty}F_g(\kappa_1(t),\kappa_2(t))=\lim_{t\rightarrow \infty}F_g(\ell_1(t),\ell_2(t))
=\lim_{t\rightarrow \infty}F_g(r_1(t),r_2(t)).$$
In particular if  we choose  $\kappa_1(t)=t x$  and $\kappa_2(t)=t y $ for   fixed vectors $x$ and $y$ then  from  Theorem 1 of \cite{R3} there exists a constant non-zero vector $v=(v_g)_{g\in G}\in \mathbb{G}^{|G|}$  such that
 $$\lim_{t\rightarrow \infty}F_g(\kappa_1(t),\kappa_2(t))=\lim_{t\rightarrow \infty}F_g(t^2x ,y)=v_g.$$
Let us now show that
$$\lim_{|x|\times|y|\rightarrow+\infty;\;x,y\in C_\delta}  \sqrt{ w_k(x)w_k(y)}e^{-\langle ix,\;gy\rangle}E_k(ix,gy)=v_g
$$
 Indeed, if it is not , then  we can find $\varepsilon>0$ and a sequences $(x_n)_n$ and $(y_n)_n$ of $C_\delta$ such that
$|x_n|\times|y_n|\rightarrow+\infty$ and
$$|F_g(x_n,y_n)-v_g|>\varepsilon$$
we  can assume that $|x_n| \rightarrow+\infty$ and $ |y_n|\rightarrow+\infty$, since we have
$$F_g(x_n,y_n)=F_g(\sqrt{|x_n||y_n|}x_n',(\sqrt{|x_n||y_n|}y_n')$$
where $x'_n=x_n/|x_n|$ and $y_n=y'_n/|y_n|$. We may also assume   that $(x_n)_n$  and $(y_n)_n$ are  admissibles.
  Thus it  yield that
$$\lim_{t\rightarrow \infty}F_g(x_n ,y_n)=v_g$$
which is a contradiction.
\end{proof}
\begin{cor}
 Let $\delta>0$. There exists a constant  $c>0$,   such that
\begin{equation}\label{EE}
   |E_k(ix,gy)|\leq   \;\frac{c}{\sqrt{ w_k(x)w_k(y)}}
 \end{equation}
   for all $x,y\in C_\delta$  and $g\in G$.
\end{cor}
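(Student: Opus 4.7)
The inequality we seek is equivalent to the uniform boundedness of the function
\[
F_g(x,y) = \sqrt{w_k(x)w_k(y)}\,e^{-i\langle x, gy\rangle}E_k(ix, gy)
\]
on $C_\delta\times C_\delta$, since $|e^{-i\langle x, gy\rangle}| = 1$. The plan is to combine the asymptotic statement of Theorem~\ref{tth} with a compactness argument on the complementary ``small'' regime, after a scaling reduction to the diagonal $|x|=|y|$.

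First I would observe that $F_g$ is invariant under the one-parameter rescaling $(x,y)\mapsto (sx,\,y/s)$ for any $s>0$. Indeed, the homogeneity $w_k(tz) = t^{2\gamma_k}w_k(z)$ makes $\sqrt{w_k\cdot w_k}$ invariant, property (ii) of the Dunkl kernel gives $E_k(isx,\,gy/s) = E_k(ix,\,gy)$, and plainly $\langle sx,\,gy/s\rangle = \langle x,\,gy\rangle$. Consequently, for any $x,y\in C_\delta$ with $|x|,|y|>0$, choosing $s=\sqrt{|y|/|x|}$ yields $F_g(x,y) = F_g(u,v)$ with $u=sx$, $v=y/s$, and $|u|=|v|=\sqrt{|x||y|}$; moreover $u,v\in C_\delta$ because $C_\delta$ is a cone. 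It therefore suffices to bound $F_g$ on the diagonal set parametrized by the common norm $t=|u|=|v|\in[0,\infty)$ and unit directions $(u',v')$ in the compact set $(C_\delta\cap S^{n-1})^2$.

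Next I would split this parameter range into three regimes. For $t\ge T_0$ with $T_0$ large, Theorem~\ref{tth} applies (since $|u||v|=t^2\to\infty$) and gives $F_g(tu',tv')\to v_g$, hence a uniform bound on this tail. For $t\in[t_0,T_0]$ with a fixed $t_0>0$, the set of arguments $(tu',tv')$ is a compact subset of $C_\delta\times C_\delta$ bounded away from every singular hyperplane $H_\alpha$ (because $\langle tu',\alpha\rangle\ge\delta t_0>0$), so $F_g$ is continuous and bounded there. For $t\in(0,t_0]$, rewriting
\[
F_g(tu',tv') = t^{2\gamma_k}\sqrt{w_k(u')w_k(v')}\,e^{-it^2\langle u', gv'\rangle}\,E_k(iu',\,t^2 gv')
\]
via the homogeneity of $w_k$ together with one more application of property (ii) with $\lambda=t$, one sees this tends to $0$ uniformly in $(u',v')$ as $t\to 0^+$, thanks to the standing assumption $\gamma_k>0$ and continuity of $E_k$ at the second argument~$0$.

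The only delicate point is the behavior near the origin: the naive worry is that $1/\sqrt{w_k(x)w_k(y)}$ blows up as either argument approaches a wall of the chamber, but the scaling reduction funnels all such concerns to the single endpoint $t\to 0^+$ on the diagonal, where the extra factor $t^{2\gamma_k}$ produced by homogeneity compensates and no uniformity is lost. Hence $\sup_{C_\delta\times C_\delta}|F_g|<\infty$, which is the claim.
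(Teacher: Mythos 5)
Your proof is correct, and its backbone coincides with the paper's: both arguments split on the size of $|x|\,|y|$ and invoke Theorem \ref{tth} to bound $F_g$ uniformly once $|x|\,|y|$ exceeds some threshold $M$. Where you diverge is in the complementary regime. The paper disposes of $|x|\,|y|\leq M$ in two lines: by (\ref{Ez}) with $z=i$ one has $|E_k(ix,gy)|\leq 1$, and since $w_k(x)\leq \big(\prod_{\alpha\in R^+}|\alpha|^{2k(\alpha)}\big)\,|x|^{2\gamma_k}$, the product $\sqrt{w_k(x)w_k(y)}\,|E_k(ix,gy)|$ is at most a constant times $(|x|\,|y|)^{\gamma_k}\leq M^{\gamma_k}$. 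You instead first normalize to $|x|=|y|$ using the invariance of $F_g$ under $(x,y)\mapsto(sx,y/s)$ --- a genuinely nice observation, and in fact the same trick the paper uses inside the proof of Theorem \ref{tth} itself --- and then treat a compact middle annulus by continuity and the regime $t\to 0^+$ by homogeneity of $w_k$ plus $\gamma_k>0$. All three of your steps are sound (in particular $C_\delta\cap S^{n-1}$ is compact and bounded away from the hyperplanes $H_\alpha$, so the continuity argument is legitimate, and your $t\to 0^+$ computation via property (ii) is correct), but the middle and small regimes are both subsumed by the single elementary bound $|E_k(ix,gy)|\leq 1$, which makes the scaling reduction and the compactness step unnecessary here. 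What your route buys is independence from the pointwise bound (\ref{Ez}) on the imaginary axis, at the cost of extra machinery; what the paper's route buys is brevity.
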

\begin{proof}
From Theorem \ref{tth} we can find $M>0$ such that for all $x,y\in C_\delta$, $|x|\times|y|\geq M$ we have
$$\sqrt{ w_k(x)w_k(y)} |E_k(ix,gy)|\leq 2|v_g|$$
for all $g\in G$. When  $|x|\times|y|\leq M$,
by use of  (\ref{Ez}), one obtains
$$\sqrt{ w_k(x)w_k(y)} |E_k(ix,gy)|\leq \left(\prod_{\alpha\in R^+}|\alpha|\right)^{2\gamma_k}\;(|x|\times|y|)^\gamma_k\leq
 M^\gamma_k\;\left(\prod_{\alpha\in R^+}|\alpha|\right)^{2\gamma_k}. $$
Hence (\ref{EE}) follows.
\end{proof}

  \end{document}